\theoremstyle{definition} 
\newtheorem{theor}{Theorem}[section]
\newtheorem{lem}[theor]{Lemma}
\newtheorem{prop}[theor]{Proposition}
\newtheorem{definition}{Definition}[section]
\begin{document}
\title{Tangent prolongation of $\mathcal{C}^r$-differentiable loops}
\author{\'Agota Figula (Debrecen), and P\'eter T. Nagy (Budapest)}
\date{\emph{In memory of Professor Lajos Tam\'assy}}
\footnotetext{2010 {\em Mathematics Subject Classification: 20N05}.}
\footnotetext{{\em Key words and phrases:} differentiable loops, tangent prolongation of smooth multiplications, abelian extensions, weak inverse and weak associative properties}
\footnotetext{This paper was supported by the National Research, Development and Innovation Office (NKFIH) Grant No. K132951, and by the EFOP-3.6.1-16-2016-00022 project of the European Union, co-financed by the European Social Fund.}
\maketitle
\begin{abstract} The aim of our paper is to generalize the tangent prolongation of Lie groups to non-associative multiplications and to examine how the weak associative and weak inverse properties are transferred to the multiplication defined on the tangent bundle. We obtain that the tangent prolongation of a $\mathcal{C}^r$-differentiable loop ($r\geq 1$) is a $\mathcal{C}^{r-1}$-differentiable loop that has the classical weak inverse and weak associative properties of the initial loop. 
\end{abstract}

\section{Introduction}

	The tangent space of a differentiable manifold $M$ at $\xi\in M$ is $\mathrm{T}_\xi (M)$ and the tangent bundle is $\mathrm{T}(M)$. The tangent map of a differentiable map $\varphi : M\to M$ at $\xi\in M$ will be denoted by  $\mathrm{d}_\xi\varphi:\mathrm{T}_\xi (M)\to \mathrm{T}_{\varphi(\xi)}(M)$. For a Lie group $G$ with Lie algebra $\mathfrak{g}$ any element $(\xi,X)\in\mathrm{T}(G)$ can be identified with the element $(\xi,\mathrm{d}_\xi\lambda^{-1}_\xi X)$ of the Cartesian product $G\times \mathfrak{g}$, where $\lambda_\xi:G\to G$ denotes the left translation on $G$.  It is well-known that the Lie group structure of $G$ has a natural prolongation to tangent bundle $\mathrm{T}(G)$ such that the corresponding multiplication on $G\times \mathfrak{g}$ is expressed by    
\begin{equation}\label{lietanbun} (\xi,x)\cdot(\eta,y) = \big(\xi\eta,\mathrm{d}_{\xi\eta}\lambda^{-1}_{\xi\eta}\frac{d}{dt}\big|_{t=0}(\xi\exp tx\cdot\eta\exp ty)\big) =  \big(\xi\eta,\mathrm{d}_{\epsilon}(\lambda^{-1}_{\eta}\rho_{\eta})x + y\big),\end{equation}
where $\xi,\eta\in G$ and $x,y\in\mathfrak{g}$. Clearly, the tangent prolongation structure on 
$G\times \mathfrak{g}$ is a semi-direct product $G\ltimes\mathfrak{g}^+$, where $\mathfrak{g}^+$ is the additive group of $\mathfrak{g}$, cf. \cite{YI}, \cite{KMS}, \cite{Sau}. \par
The aim of our paper is to generalize the tangent prolongation (\ref{lietanbun}) to non-associative multiplications and to show that the weak associative and weak inverse properties are preserved by the tangent prolongation. We prove that the tangent prolongation of a $\mathcal{C}^r$-differentiable loop ($r\geq 1$) is a $\mathcal{C}^{r-1}$-differentiable loop that has the same classical weak inverse and weak associative properties as the initial loop. It is worth noting that, unlike the associative case, the multiplications of differentiable loops are not necessarily analytic, (cf. the loop constructions in Part 2 of \cite{nst}). Our research is a reflection on the subject raised by J. Grabowski  in his conference lecture \emph{Tangent and cotangent loopoids} \cite{Gra}.\par
We examine in \S 3 the two-sided inverse, left and right inverse, monoassociative, left and right alternative, flexible, left and right Bol properties of general abelian linear extensions. The loops in this class of extensions (cf. \cite{StVo2}, \cite{Fi_Na}) are natural abstract generalizations of the tangent prolongation of differentiable loops. \S 4 is devoted to the discussion of the differentiability properties of tangent prolongations. In \S 5 we investigate such abelian linear extensions, which are determined by a homomorphism of the inner mapping group into the linear group acting on the tangent space at the identity element. We apply our results to the tangent prolongation of $\mathcal{C}^r$-differentiable loops, where the above homomorphism is determined by the tangent map of inner mappings. At the end in \S 6 we obtain our main result on the tangent prolongation of $\mathcal{C}^r$-differentiable loops and on their classical weak inverse and weak associative properties.

\section{Preliminaries}

There are two usual formal definitions of quasigroups: as a set with a binary multiplication, or the other, as a set with three binary operations: multiplication, left division and right division. We will study, among others, the differentiability properties of each operation; therefore, it is worth using the definition with three operations.\par
A \emph{quasigroup} $(Q,\cdot,\backslash,/)$ is a set  equipped with three binary operations: \emph{multiplication} $\cdot$,  \emph{left division} $\backslash$ and \emph{ right division} $/$ satisfying the identities
\[y = x\cdot(x\backslash y),\quad y = x\backslash(x\cdot y),\quad y = (y /x) \cdot x,\quad y = (y\cdot x)/ x,\quad x,y\in Q.\]
A \emph{loop} $(L,\cdot,\backslash,/)$ is a quasigroup with an identity element $e\in L$, ($e\cdot x = x\cdot e = x$ for each $x\in L$). The \emph{left translations} $\lambda_x : y \mapsto x\cdot y$,  and the \emph{right translations} $\rho_x : y \mapsto y\cdot x$ are bijective maps and $x\backslash y = \lambda_x^{-1}y$, $x/y = \rho_y^{-1}x$, $x,y\in L$. The \emph{opposite loop} of $(L,\cdot,\backslash,/)$ is the loop $(L,\star,\backslash^\star,/^\star)$ defined by the operations $x\star y = y\cdot x$, $x\backslash^\star y = y/x$, $y/^\star x = x\backslash y$ on the same underlying set.  \par 
The automorphism group of a loop $L$ is denoted by $\mathrm{Aut}(L)$.
The group $\mathrm{Mlt}(L)$ generated by all left and right translations of the loop $L$ is called the \emph{multiplication group} of $L$. The \emph{inner mapping group} $\mathrm{Inn}(L) = \{\varphi\in\mathrm{Mlt}(L)\,;\,\varphi(e)= e\}$ of $L$ is the subgroup of $\mathrm{Mlt}(L)$ consisting of all bijections in $\mathrm{Mlt}(L)$ fixing the identity element $e$.\par
We will reduce the use of parentheses on the loop $L$ by the following convention: juxtaposition will denote multiplication, the division operations are less binding than juxtaposition, and the multiplication is less binding than the divisions. For instance the expression $xy / u \cdot v \backslash w$ is a short form of $((x\cdot y)/ u)\cdot(v\backslash w)$.\par 
We say that a loop $L$ has a \emph{weak associative property} if it satisfies one of the following properties:\\[1ex]
$\begin{array}{l l l}
\text{\emph{monoassociative}}: & x \cdot x^2=x^2 \cdot x  &\text{for all} \quad x \in L, \\
\text{\emph{left alternative}}: & x \cdot xy=x^2 \cdot y & \text{for all} \quad x,y \in L,  \\ 
\text{\emph{right alternative}}: & yx \cdot x=y \cdot x^2 & \text{for all} \quad x,y \in L, \\ 
\text{\emph{flexible}} & x \cdot yx=xy \cdot x & \text{for all} \quad x,y \in L,  \\
\text{\emph{left Bol loop}}: & (x \cdot yx)z=x(y \cdot xz) & \text{for all} \quad x,y,z \in L, \\
\text{\emph{right Bol loop}}: & z(xy \cdot x)=(zx \cdot y)x & \text{for all} \quad x,y,z \in L.
\end{array}$\vspace{10pt}\par
The element $e/x$, respectively, 
$x \backslash e$ is the \emph{left inverse}, respectively, the \emph{right inverse} of $x\in L$. If the left and the right inverses of $x$ coincide then $x^{-1}= e/x = x \backslash e$ is the \emph{inverse} of $x\in L$. If any element 
$x\in L$ has an inverse $x^{-1}$ then we say that $L$ has \emph{two-sided inverse property}.  
$L$ satisfies the \emph{left}, respectively, the \emph{right inverse property} 
if there exists a bijection $\iota:L\to L$, such that $\iota(x)\cdot xy = y$, respectively, 
$yx\cdot \iota(x)= y$ holds for all $x, y\in L$.  It is well known that in loops with left or right inverse property all elements have inverses (cf. \cite{Pfl}, I.4.2 Theorem), hence $\iota(x) = x^{-1}$.\par 
We say that a loop $L$ has a \emph{weak inverse property} if it has one of the following properties: \\[1ex]
$\begin{array}{l l l}
\text{\emph{two-sided inverse}}: & x^{-1}= e/x = x \backslash e  &\text{for all} \quad x \in L, \\
\text{\emph{left inverse}}: & x^{-1} \cdot xy=y & \text{for all} \quad x,y \in L,  \\ 
\text{\emph{right inverse}}: & yx \cdot x^{-1}=y  & \text{for all} \quad x,y \in L.
\end{array}$\vspace{10pt}\par
A loop $L$ defined on a differentiable manifold is called \emph{$\mathcal{C}^r$-differentiable loop}, 
$0<r\in\mathbb N$ or $r =\infty$, if the multiplication $(x,y)\mapsto x\cdot y$, the left division 
$(x,y)\mapsto x\backslash y$ and the right division $(x,y)\mapsto x/y$ are $\mathcal{C}^r$-differentiable 
$L\times L\to L$ maps.

\section{Linear abelian extensions}

Before the introduction of a natural loop-multiplication on the tangent bundle of  differentiable loops we consider a more general class of abstract non-associative extensions. If $(A, +)$ is an abelian group, $\varphi,\psi$ are automorphisms of $A$ and $c\in A$ is a constant, then the binary operations 
\begin{equation}\label{tquasi} x\cdot y = \varphi(x) + \psi(y) + c,\quad x\backslash y = \psi^{-1}(y - \varphi(x) - c),\quad y/x = \varphi^{-1}(y - \psi(x) - c) \end{equation}
on $A$ determine a quasigroup $(A,\cdot,\backslash,/)$, called \emph{T-quasigroup} or \emph{central quasigroup} over the abelian group $A$, investigated intensively in the last half century, (cf. e.g. \cite{KeNe1}, \cite{KeNe2}, \cite{Vic}, \cite{JDH}).
The study of \emph{non-associative abelian extensions} of abelian groups by loops, such that the quasigroups, induced between cosets of the kernel subgroups, are T-quasigroups is initiated by D. Stanovsk\'y and  P. Vojt\v{e}chovsk\'y in \cite{StVo2}. We investigated in a recent paper \cite{Fi_Na} the two-sided inverse, left-inverse, right-inverse and
inverse properties of a class of abelian extensions, called  \emph{abelian linear extensions}. \par
Let $A = (A,+)$ be an abelian group and $L = (L,\cdot, /, \backslash )$ a loop with the  identity element $\epsilon\in L$. A pair 
$(P, Q)$ is called a \emph{loop cocycle} if $P, Q$ are mappings $L\times L\to\mathrm{Aut}(A)$
satisfying  $P(\xi,\epsilon)=\mathrm{Id}=Q(\epsilon,\eta)$ for every $\xi,\eta \in L$. 
\begin{definition} \label{ext2} If $(P, Q)$ is a loop cocycle then the binary operations on $L \times A$ 
	\begin{equation} \label{mult}\begin{split} &(\xi,x) \cdot (\eta,y)=(\xi\eta,P(\xi,\eta) x + Q(\xi,\eta) y), \\
	&(\xi,x)\backslash(\eta,y) = \left(\xi\backslash\eta,Q(\xi,\xi\backslash\eta)^{-1}( y - P(\xi,\xi\backslash\eta)x)\right),\\  
	&(\eta,y)/(\xi,x) = \left(\eta/\xi,P(\eta/\xi,\xi)^{-1}(y - Q(\eta/\xi,\xi) x)\right) 
	\end{split}\end{equation}
	define the \emph{linear abelian extension} $F(P,Q)$ of the group $A$ by the loop $L$. The linear abelian extension $F(P,Q)$ is a loop with the identity element $(\epsilon,0)$.\end{definition}
Definition \ref{ext2} implies that if a linear abelian extension $F(P,Q)$ has one of the weak inverse or weak associative properties then the loop $L$ has necessarily the corresponding property. The left and right inverses in $F(P,Q)$ are 
\[\begin{split}(\xi,x)\backslash(\epsilon,0) = (\xi\backslash\epsilon,-Q(\xi,\xi\backslash\epsilon)^{-1} P(\xi,\xi\backslash\epsilon)x),\\ (\epsilon,0)/(\xi,x) = (\epsilon/\xi,-P(\epsilon/\xi,\xi)^{-1} 
Q(\epsilon/\xi,\xi) x),\end{split}\]
hence we obtain, (cf. \cite{Fi_Na}):
\begin{prop}\label{leftandrightinverses}  
	If the loop $L$ has one of the weak inverse or weak associative properties, the extension $F(P,Q)$ has the corresponding property if and only if the loop cocycle $(P,Q)$ fulfills the identities given in the following list for each property:
	\begin{enumerate}
		\item[\emph{(A)}] two-sided inverse:
		\[  P(\xi,\xi^{-1}) = Q(\xi,\xi^{-1})P(\xi^{-1},\xi)^{-1}Q(\xi^{-1},\xi),  
		\]
		\item[\emph{(B)}] left inverse:
		\[ Q(\xi^{-1},\xi\eta) = Q(\xi,\eta)^{-1}, \quad P(\xi^{-1},\xi\eta) = Q(\xi,\eta)^{-1}P(\xi,\eta)Q(\xi^{-1},\xi)^{-1}P(\xi^{-1},\xi), \]
		\item[\emph{(C)}] right inverse:
		\[ P(\xi\eta,\eta^{-1})=P(\xi,\eta)^{-1}, \quad Q(\xi\eta,\eta^{-1})=P(\xi,\eta)^{-1}Q(\xi,\eta)P(\eta,\eta^{-1})^{-1}Q(\eta,\eta^{-1}), \]
		\item[\emph{(D)}] monoassociative:
		\[ P(\xi,\xi^2)+ Q(\xi,\xi^2)\left(P(\xi,\xi) + Q(\xi,\xi)\right)=P(\xi^2,\xi)(P(\xi,\xi)+Q(\xi,\xi)) + Q(\xi^2,\xi),
		\]
		\item[\emph{(E)}] left alternative:     
		\[\begin{split}  Q(\xi,\xi\eta)Q(\xi,\eta)&=Q(\xi^2,\eta), \\ P(\xi,\xi\eta)+ Q(\xi,\xi\eta) P(\xi,\eta)&=P(\xi^2,\eta)(P(\xi,\xi)+Q(\xi,\xi)), \end{split} \]
		\item[\emph{(F)}] right alternative: 
		\[\begin{split} P(\eta\xi,\xi)P(\eta,\xi)&=P(\eta,\xi^2), \\ P(\eta\xi,\xi) Q(\eta,\xi)+ Q(\eta\xi,\xi)&=Q(\eta,\xi^2)(P(\xi,\xi)+Q(\xi,\xi)),  \end{split} \]
		\item[\emph{(G)}] flexible: 
		\[\begin{split} 	Q(\xi,\eta\xi)P(\eta,\xi)&=P(\xi\eta,\xi)Q(\xi,\eta),\\
		P(\xi,\eta\xi)+ Q(\xi,\eta\xi) Q(\eta,\xi)&=P(\xi\eta,\xi)P(\xi,\eta)+Q(\xi\eta,\xi), 
		\end{split} \] 
		\item[\emph{(H)}] left Bol: 
		\[\begin{split} Q(\xi,\eta \cdot \xi\zeta)Q(\eta,\xi\zeta)&Q(\xi,\zeta)=Q(\xi \cdot \eta\xi,\zeta),\\ Q(\xi,\eta\cdot\xi\zeta)P(\eta,\xi\zeta)= &P(\xi \cdot \eta\xi,\zeta)Q(\xi,\eta\xi)P(\eta,\xi),\\P(\xi,\eta\cdot \xi\zeta)+ Q(\xi,\eta\cdot &\xi\zeta)Q(\eta,\xi\zeta)P(\xi,\zeta)=\\
		=P(\xi \cdot\eta\xi,\zeta)\big(P(\xi,\eta&\xi)+ Q(\xi,\eta\xi) Q(\eta,\xi)\big), \end{split}\] 
		\item[\emph{(J)}] right Bol: 
		\[\begin{split} P(\zeta,\xi\eta\cdot\xi)=P(\zeta\xi\cdot\eta,\xi)&P(\zeta\xi,\eta)P(\zeta,\xi), 
		\\ Q(\zeta,\xi\eta\cdot \xi)P(\xi\eta,\xi)Q(\xi,\eta)=&P(\zeta\xi\cdot\eta,\xi)Q(\zeta\xi,\eta), \\ Q(\zeta,\xi\eta\cdot\xi)\big(P(\xi\eta,\xi)P(\xi,&\eta)+ Q(\xi\eta,\xi)\big)=\\= P(\zeta\xi\cdot\eta,\xi) P(\zeta\xi,\eta)Q(&\zeta,\xi)+Q(\zeta\xi\cdot \eta,\xi). \end{split}\]
	\end{enumerate}
\end{prop}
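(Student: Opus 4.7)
The plan is a case-by-case direct computation. For each of the nine weak inverse or weak associative properties, I will substitute the multiplication and division rules of Definition~\ref{ext2} into the defining identity in $F(P,Q)$ and then compare the two sides coordinate-wise in $L\times A$. The first-coordinate equation is in every case precisely the corresponding identity in $L$, which holds by assumption and therefore contributes no new information. All the content of the proposition lives in the second coordinate.

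The observation that makes the passage to the listed identities routine is that the second-coordinate equation, after expansion, is an $A$-valued identity of the form ``linear combination of automorphisms applied to $x$ equals linear combination of automorphisms applied to $x$'', and similarly for $y$ and (in the three-variable cases (H), (J)) also for $z$. Here the coefficient automorphisms are compositions of $P$'s and $Q$'s evaluated at various products of $\xi,\eta,\zeta\in L$, these products being normalized with the help of the first-coordinate identity. Because $x,y,z$ vary independently over $A$, such an equation holds identically if and only if the coefficients of each free variable agree separately; this step produces each individual equation in the lists of (A)--(J) and simultaneously yields both the ``if'' and ``only if'' directions. As a warm-up, case (A) falls out of the formulas for $(\xi,x)\backslash(\epsilon,0)$ and $(\epsilon,0)/(\xi,x)$ displayed just before the proposition: after using $\xi\backslash\epsilon=\epsilon/\xi=\xi^{-1}$ in $L$ to match first coordinates, the equality of the second coordinates for all $x\in A$ reduces by composition with $Q(\xi,\xi^{-1})$ on the left and rearrangement to exactly the identity in (A).

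I expect the main obstacle to be purely organisational, in the flexible case (G) and the two Bol cases (H) and (J). There each side of the defining identity involves three nested applications of the multiplication, hence up to four composed factors of $P$'s and $Q$'s, and one must carefully match arguments such as $\xi\cdot \eta\xi$ against $(\xi\eta)\xi$, or $\eta\cdot \xi\zeta$ against $(\eta\xi)\zeta$, by invoking the first-coordinate identity holding in $L$. The simpler cases (B), (C) (one nested multiplication together with the inverse) and (D)--(F) (two nested multiplications and at most two free variables) follow the same scheme with shorter expressions; in particular, in the monoassociative case (D) only a single free variable $x$ is present, which is why the three coefficient equations of the alternative cases collapse to one. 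No step beyond expansion, substitution of the $L$-identity in the first coordinate, and separation of coefficients in $x$, $y$, $z$ is needed, so the nine items are logically independent and may be verified in any order.
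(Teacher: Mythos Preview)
Your proposal is correct and follows essentially the same approach as the paper: expand the multiplication (\ref{mult}) in each defining identity, use the assumed $L$-identity to match first coordinates, and then separate the second-coordinate equation by setting the free variables $x,y,z$ to $0$ in turn (which is exactly your ``coefficients agree separately'' step). The only organisational differences are that the paper cites \cite{Fi_Na} for cases (A)--(C) rather than redoing them, and it derives (F) from (E) and (J) from (H) by passing to the opposite loop instead of computing all nine cases directly.
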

\begin{proof}
	Assertions (A), (B), (C) with respect to the weak inverse properties of the extension $F(P,Q)$ are proved by Propositions 3, 5 and 7 in \cite{Fi_Na}.\par
	Assertion (D) follows from the equations 
	\begin{equation} \label{mono2}\begin{split} &(\xi,x)\cdot(\xi,x)^2 = (\xi \cdot \xi^2,P(\xi,\xi^2)x + Q(\xi,\xi^2)(P(\xi,\xi)x + Q(\xi,\xi)x)= \\
	&= (\xi,x)^2\cdot(\xi,x) = (\xi^2 \cdot \xi,P(\xi^2,\xi)(P(\xi,\xi)x + Q(\xi,\xi)x )+ Q(\xi^2,\xi)x)\end{split}
	\end{equation} 
	for all $\xi\in L$ and $x\in A$.\par
	The components of the left alternative identity of
	$F(P,Q)$ give the identity     
	\[ \begin{split}  P(\xi,\xi\eta)x + Q(\xi,\xi\eta)(P(\xi,\eta)x + Q(\xi,\eta)y)= \\ = P(\xi^2,\eta)(P(\xi,\xi)x + Q(\xi,\xi)x )+ Q(\xi^2,\eta)y \end{split}
	\] 
	for all $\xi,\eta \in L$ and $x,y \in A$. Putting $y = 0$, respectively, $x = 0$, we get the  identities (E).\par
	Considering the opposite loop of $L$ we obtain (F). 
	Similar computations give the condition (G) of flexible property.\par
	The components of the left Bol identity in $F(P,Q)$ give the equation    
	\begin{equation} \label{eq24}\begin{split} P(\xi,\eta \cdot \xi\zeta)x + Q(\xi,\eta\cdot\xi\zeta)[P(\eta,\xi\zeta)y + Q(\eta,\xi\zeta)(P(\xi,\zeta) x +Q(\xi,\zeta) z )]= \\ 
	P(\xi \cdot (\eta\xi),\zeta)[P(\xi,\eta\xi)x + Q(\xi,\eta\xi)(P(\eta,\xi) y + Q(\eta,\xi) x)]+ Q(\xi \cdot\eta\xi,\zeta)z\end{split}
	\end{equation} 
	for all $\xi,\eta,\zeta \in L$ and $x,y,z \in A$. Putting $x=y=0$ into the identity (\ref{eq24}), we obtain the first identity in assertion (H), the substitutions $x=z=0$ and $y=z=0$ give the second and the third identities in (H).\par
	We obtain condition (J) from (H) using the opposite loop of $L$.
\end{proof}

	\section{Tangent prolongation}

We extend the construction (\ref{lietanbun}) of the tangent prolongation of Lie groups to $\mathcal{C}^r$-differentiable loops $L$, $r\geq 1$. Similarly to Lie groups, the map $\mathscr{T}:(\xi,x)\mapsto \mathrm{d}_\epsilon\lambda_\xi x$ for any 
$(\xi,x)\in L \times \mathrm{T}_\epsilon(L)$ gives an identification $\mathscr{T}:L \times \mathrm{T}_\epsilon(L)\to \mathrm{T}(L)$ of the Cartesian product manifold $L \times \mathrm{T}_\epsilon(L)$ with the tangent bundle $\mathrm{T}(L)$. 
\begin{definition} Let $\alpha(t)$, $\beta(t)$ be differentiable curves in the $\mathcal{C}^r$-differentiable loop $L$ ($r\geq 1$) with initial data $\alpha(0) =\beta(0) = \epsilon$ and  
	$\alpha'(0) = x$, $\beta'(0) = y$, $x,y \in \mathrm{T}_\epsilon(L)$. We define the multiplication of the \emph{tangent prolongation} $\mathscr{T}(L \times \mathrm{T}_\epsilon(L))$ of  $L$ on $L \times \mathrm{T}_\epsilon(L)$ by 
	\[ (\xi,x)\cdot(\eta,y) = \big(\xi\eta,\mathrm{d}_{\xi\eta}\lambda^{-1}_{\xi\eta}\frac{d}{dt}\big|_{t=0}(\xi\alpha(t)\cdot\eta\beta(t))\big) =
	(\xi\eta,\mathrm{d}_\epsilon\lambda_{\xi\eta}^{-1} \rho_\eta \lambda_\xi x+ \mathrm{d}_\epsilon \lambda_{\xi\eta}^{-1} \lambda_\xi \lambda_\eta y).\] 
	The identity element of $\mathscr{T}(L \times \mathrm{T}_\epsilon(L))$ is $(\epsilon,0)$.
\end{definition}
Clearly, the maps $\lambda_{\xi\eta}^{-1} \rho_\eta \lambda_\xi$ and $\lambda_{\xi\eta}^{-1} \lambda_\xi \lambda_\eta$ are inner mappings of the loop $L$. They are differentiable maps and the assignment $\phi\mapsto\mathrm{d}_\epsilon\phi$ is a homomorphism $\mathrm{d}_\epsilon: \mathrm{Inn}(L) \to\mathrm{Aut}(\mathrm{T}_\epsilon(L))$.  Naturally, the map $\varphi\mapsto \mathrm{d}_\xi\varphi$ with arbitrary $\xi\in L$ acts on all elements $\varphi\in\mathrm{Mlt}(L)$ of the multiplication group $\mathrm{Mlt}(L)$ of $L$.
\begin{lem} \label{cocyclepro}
	Let $L$ be a $\mathcal{C}^r$-differentiable loop and $\mathrm{T}_\epsilon(L)$ its tangent vector space. The tangent prolongation $\mathscr{T}(L \times \mathrm{T}_\epsilon(L))$ of $L$ is the linear abelian extension $F(P,Q)$ of the abelian group $\mathrm{T}_\epsilon(L)$ by $L$ determined by the loop cocycle $(P,Q)$ of $\mathscr{T}(L \times \mathrm{T}_\epsilon(L))$ given by the maps 
	\[ P(\xi,\eta):=\mathrm{d}_\epsilon(\lambda_{\xi\eta}^{-1} \rho_\eta \lambda_\xi), \quad  Q(\xi,\eta):=\mathrm{d}_\epsilon(\lambda_{\xi\eta}^{-1} \lambda_\xi \lambda_\eta). \]
\end{lem}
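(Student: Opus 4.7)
The plan is to verify two things: first, that the pair $(P,Q)$ defined in the statement is a loop cocycle in the sense of Definition~\ref{ext2}; second, that the prolonged multiplication of $\mathscr{T}(L\times\mathrm{T}_\epsilon(L))$ coincides with the linear-extension multiplication (\ref{mult}) determined by this cocycle. Once both are established, the division operations of $F(P,Q)$ and of the tangent prolongation must agree by uniqueness of divisions in a loop, so nothing further has to be checked.

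For the first point, I would observe that both $\lambda_{\xi\eta}^{-1}\rho_\eta\lambda_\xi$ and $\lambda_{\xi\eta}^{-1}\lambda_\xi\lambda_\eta$ are compositions of $\mathcal{C}^r$-diffeomorphisms of $L$ (the $\mathcal{C}^r$-differentiability of $\lambda_\xi^{-1}$ and $\rho_\eta^{-1}$ follows from the $\mathcal{C}^r$-differentiability of the divisions via $\lambda_\xi^{-1}y=\xi\backslash y$ and $\rho_\eta^{-1}y=y/\eta$), and a direct evaluation shows both send $\epsilon$ to $\epsilon$. Hence they lie in $\mathrm{Inn}(L)$, and their tangent maps at $\epsilon$ are linear automorphisms of $\mathrm{T}_\epsilon(L)$, i.e.\ elements of $\mathrm{Aut}(\mathrm{T}_\epsilon(L))$ regarded as the additive abelian group. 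The boundary identities $P(\xi,\epsilon)=\mathrm{Id}$ and $Q(\epsilon,\eta)=\mathrm{Id}$ follow at once from $\rho_\epsilon=\mathrm{Id}$, $\lambda_{\xi\cdot\epsilon}=\lambda_\xi$, and $\lambda_\epsilon=\mathrm{Id}$, $\lambda_{\epsilon\cdot\eta}=\lambda_\eta$.

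For the second point, the key computation is a chain rule applied to the two-variable auxiliary map $f(s,t):=\xi\alpha(s)\cdot\eta\beta(t)$, which can be read as $\rho_{\eta\beta(t)}(\lambda_\xi(\alpha(s)))$ in the $s$-variable and as $\lambda_{\xi\alpha(s)}(\lambda_\eta(\beta(t)))$ in the $t$-variable. Differentiating the diagonal gives
\[\frac{d}{dt}\Big|_{t=0}f(t,t)=\frac{\partial f}{\partial s}\Big|_{(0,0)}+\frac{\partial f}{\partial t}\Big|_{(0,0)}=\mathrm{d}_\epsilon(\rho_\eta\lambda_\xi)\,x+\mathrm{d}_\epsilon(\lambda_\xi\lambda_\eta)\,y,\]
with both summands lying in $\mathrm{T}_{\xi\eta}(L)$. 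Applying $\mathrm{d}_{\xi\eta}\lambda_{\xi\eta}^{-1}$ and composing tangent maps via the chain rule yields precisely $P(\xi,\eta)x+Q(\xi,\eta)y$, which matches the second component of the multiplication in (\ref{mult}).

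The only delicate point is keeping track of base points and tangent spaces, so that the composition identities $\mathrm{d}_{\xi\eta}\lambda_{\xi\eta}^{-1}\circ\mathrm{d}_\epsilon(\rho_\eta\lambda_\xi)=\mathrm{d}_\epsilon(\lambda_{\xi\eta}^{-1}\rho_\eta\lambda_\xi)=P(\xi,\eta)$ and its analogue for $Q(\xi,\eta)$ are invoked without ambiguity. Beyond this bookkeeping, no conceptual obstacle arises; the proof is essentially the same chain-rule manipulation already exhibited in (\ref{lietanbun}) for Lie groups, with $\exp tx$ and $\exp ty$ replaced by arbitrary curves $\alpha$, $\beta$ representing the tangent vectors $x$ and $y$.
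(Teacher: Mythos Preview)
Your argument is correct and matches the paper's approach: the paper in fact states this lemma without proof, treating it as immediate from the definition of the tangent prolongation (where the second component of the multiplication is already written as $\mathrm{d}_\epsilon(\lambda_{\xi\eta}^{-1}\rho_\eta\lambda_\xi)x+\mathrm{d}_\epsilon(\lambda_{\xi\eta}^{-1}\lambda_\xi\lambda_\eta)y$) together with the preceding remark that these compositions are inner mappings. Your write-up simply spells out the cocycle verification and the chain-rule computation that the paper leaves implicit.
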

\begin{prop}\label{tangentdiff} The tangent prolongation $\mathscr{T}(L \times \mathrm{T}_\epsilon(L))$ of a $\mathcal{C}^r$-differentiable loop $L$ ($r\geq 1$) is a $\mathcal{C}^{r-1}$-differentiable loop.
\end{prop}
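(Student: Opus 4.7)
The plan is to express the cocycle maps $P$ and $Q$ of Lemma \ref{cocyclepro} as fiber-wise tangent maps of explicit $\mathcal{C}^r$ compositions of loop operations, so that $P$ and $Q$ themselves are $\mathcal{C}^{r-1}$, and then to verify that the three operations on $L\times \mathrm{T}_\epsilon(L)$ inherit this regularity directly from the formulas (\ref{mult}). The underlying general fact I rely on is standard: if $f(\xi,z)$ is a $\mathcal{C}^r$ map of two manifold variables and $z_0$ is fixed, then the partial tangent map $\mathrm{d}_{z_0}f(\xi,\cdot)$ depends $\mathcal{C}^{r-1}$ on the parameter $\xi$, when read in local coordinates as a map into a space of linear operators.

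Concretely, I would introduce the auxiliary maps
\[ \Phi(\xi,\eta,z) = (\xi\eta)\backslash(\xi z\cdot \eta), \qquad \Psi(\xi,\eta,z) = (\xi\eta)\backslash(\xi\cdot \eta z), \]
which are $\mathcal{C}^r$ as maps $L\times L\times L\to L$, since they are built from the multiplication and left division of $L$. By construction $\Phi(\xi,\eta,\epsilon)=\Psi(\xi,\eta,\epsilon)=\epsilon$, and unwinding Lemma \ref{cocyclepro} yields
\[ P(\xi,\eta)=\mathrm{d}_\epsilon\Phi(\xi,\eta,\cdot), \qquad Q(\xi,\eta)=\mathrm{d}_\epsilon\Psi(\xi,\eta,\cdot). \]
Read in a chart around $\epsilon\in L$, these identities realize $(\xi,\eta)\mapsto P(\xi,\eta)$ and $(\xi,\eta)\mapsto Q(\xi,\eta)$ as $\mathcal{C}^{r-1}$ maps from $L\times L$ into the Lie group $\mathrm{GL}(\mathrm{T}_\epsilon(L))$.

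It then remains to plug $P$ and $Q$ back into the formulas (\ref{mult}). The multiplication
\[ (\xi,x)\cdot(\eta,y)=(\xi\eta,P(\xi,\eta)x+Q(\xi,\eta)y) \]
is the composition of the $\mathcal{C}^r$ product $(\xi,\eta)\mapsto\xi\eta$, the $\mathcal{C}^{r-1}$ operator-valued maps $P,Q$, and the $\mathcal{C}^\infty$ bilinear evaluation $(A,x)\mapsto Ax$, hence is $\mathcal{C}^{r-1}$. For the left division $(\xi,x)\backslash(\eta,y)=(\xi\backslash\eta,Q(\xi,\xi\backslash\eta)^{-1}(y-P(\xi,\xi\backslash\eta)x))$ the first component is $\mathcal{C}^r$, and in the second component one substitutes the $\mathcal{C}^r$ pair $(\xi,\xi\backslash\eta)$ into $P,Q$, inverts the resulting automorphism (which is a $\mathcal{C}^\infty$ operation on $\mathrm{GL}(\mathrm{T}_\epsilon(L))$), and applies it linearly to $y-P(\xi,\xi\backslash\eta)x$; the total is $\mathcal{C}^{r-1}$. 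The right division is treated symmetrically. There is no substantive obstacle: the only loss of smoothness is the single tangent differentiation defining $P$ and $Q$, which is precisely what drops the regularity of the prolongation from $\mathcal{C}^r$ to $\mathcal{C}^{r-1}$.
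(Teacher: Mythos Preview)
Your argument is correct and follows essentially the same strategy as the paper: both express $P$ and $Q$ as tangent maps at $\epsilon$ of $\mathcal{C}^r$ compositions of the loop operations, deduce their $\mathcal{C}^{r-1}$ regularity, and then read off the regularity of the three operations from the formulas (\ref{mult}). The only cosmetic difference is that the paper handles the divisions by writing $P(\xi,\eta)^{-1}$ and $Q(\xi,\eta)^{-1}$ explicitly as tangent maps of the inverse inner mappings, whereas you invoke the smoothness of matrix inversion on $\mathrm{GL}(\mathrm{T}_\epsilon(L))$; either device yields the same conclusion.
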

\begin{proof} Since the multiplication, the left and right divisions of $L$ are $\mathcal{C}^r$-differentiable maps, and $\lambda_\sigma\tau = \sigma\tau$, $\rho_\sigma\tau =  \tau\sigma$, $\lambda^{-1}_\sigma\tau = \sigma\backslash\tau$, $\rho^{-1}_\sigma\tau =  \tau/\sigma$, the left and right translations and their inverses are $\mathcal{C}^r$-differentiable maps, too. According to Lemma \ref{cocyclepro} the loop cocycle $(P,Q)$ of $\mathscr{T}(L \times \mathrm{T}_\epsilon(L))$ is expressed by the first derivative of the products of left and right translations and of their inverses. Hence $P(\xi,\eta)$, $Q(\xi,\eta)$, and the multiplication of $\mathscr{T}(L \times \mathrm{T}_\epsilon(L))$ determined by (\ref{mult}) are  $\mathcal{C}^{r-1}$-differentiable. According to (\ref{mult}) the left and right divisions of the tangent prolongation $\mathscr{T}(L \times \mathrm{T}_\epsilon(L))$ are $\mathcal{C}^{r-1}$-differentiable, since the maps 
	\[ P(\xi,\eta)^{-1}:=\mathrm{d}_\epsilon(\lambda_\xi^{-1} \rho_\eta^{-1} \lambda_{\xi\eta}), \quad  Q(\xi,\eta)^{-1}:=\mathrm{d}_\epsilon(\lambda_\eta^{-1}\lambda_\xi^{-1} \lambda_{\xi\eta}). \]
	are $\mathcal{C}^{r-1}$-differentiable. It follows that the tangent prolongation $\mathscr{T}(L \times \mathrm{T}_\epsilon(L))$ is a $\mathcal{C}^{r-1}$-differentiable loop.
\end{proof}

	\section{Tangent-like extension}

Now, we treat the properties of the tangent prolongation in a more general abstract setting, considering  an arbitrary homomorphism $\Phi: \mathrm{Inn}(L) \to\mathrm{Aut}(A)$ instead of the assignment $\mathrm{d}_\epsilon: \mathrm{Inn}(L) \to\mathrm{Aut}(\mathrm{T}_\epsilon(L))$, furthermore we apply the obtained conditions to the tangent prolongation of $L$. 
\begin{definition}\label{dphi}
	Let $L$ be a loop, $A$ an abelian group and $\Phi: \mathrm{Inn}(L) \to\mathrm{Aut}(A)$ a homomorphism. The linear abelian extension $F(P,Q)$ defined by the loop cocycle $(P,Q)$  
	\begin{equation} \label{mapPQ} P(\xi,\eta):=\Phi(\lambda_{\xi\eta}^{-1} \rho_\eta \lambda_\xi), \quad  Q(\xi,\eta):=\Phi(\lambda_{\xi\eta}^{-1} \lambda_\xi \lambda_\eta). \end{equation} 
	is called a \emph{tangent-like extension} of the group $A$ by the loop $L$ and will be denoted by $\Phi(L,A)$.
\end{definition}
\begin{prop}\label{tangent}
	If the loop $L$ has one of the weak inverse or weak associative properties, the tangent-like extension $\Phi(L,A)$, respectively, the tangent prolongation $\mathscr{T}(L \times \mathrm{T}_\epsilon(L))$ has the corresponding  property if and only if the loop cocycle fulfills the identities given in the following list for each property:
	\begin{enumerate}
		\item[\emph{(A)}] two-sided inverse:
		\begin{enumerate}
			\item[\emph{(i)}] $\Phi(L,A):\quad\quad\quad\quad\Phi(\rho_{\xi^{-1}} \lambda_\xi) = \Phi(\lambda_\xi \rho_\xi^{-1}\lambda_{\xi^{-1}}\lambda_\xi),$
			\item[\emph{(ii)}] $\mathscr{T}(L \times \mathrm{T}_\epsilon(L)):\quad\; \mathrm{d}_\xi\rho_{\xi^{-1}}  = \mathrm{d}_\xi\lambda_\xi \rho_\xi^{-1}\lambda_{\xi^{-1}}$,	\end{enumerate} 
		\item[\emph{(B)}] left inverse:
		\begin{enumerate}
			\item[\emph{(i)}] $\Phi(L,A):\quad\quad\quad\quad\Phi(\lambda_{\eta}^{-1} \rho_{\xi\eta} \lambda_\xi^{-1}) = \Phi(\lambda_\eta^{-1}\lambda_\xi^{-1} \rho_\eta \lambda_\xi\rho_\xi\lambda_\xi^{-1})$,
			\item[\emph{(ii)}] $\mathscr{T}(L \times \mathrm{T}_\epsilon(L)):\quad\; \mathrm{d}_{\xi^{-1}} \lambda_\xi \rho_{\xi\eta} = \mathrm{d}_{\xi^{-1}} \rho_\eta \lambda_\xi\rho_\xi$,	\end{enumerate}
		\item[\emph{(C)}] right inverse:
		\begin{enumerate}
			\item[\emph{(i)}] $\Phi(L,A):\quad\quad\quad\quad  \Phi(\lambda_\xi^{-1} \lambda_{\xi\eta} \lambda_{\eta^{-1}})= \Phi(\lambda_\xi^{-1} \rho_\eta^{-1}\lambda_\xi\rho_\eta\lambda_\eta \lambda_{\eta^{-1}}),$
			\item[\emph{(ii)}] $\mathscr{T}(L \times \mathrm{T}_\epsilon(L)):\quad\; \mathrm{d}_{\eta^{-1}} \rho_\eta\lambda_{\xi\eta} = \mathrm{d}_{\eta^{-1}} \lambda_\xi\rho_\eta\lambda_\eta$,	\end{enumerate}
		\item[\emph{(D)}] monoassociative:
		\begin{enumerate}
			\item[\emph{(i)}] $\Phi(L,A):\quad\quad\quad\quad \Phi(\lambda_{\xi^3}^{-1} \rho_{\xi^2} \lambda_\xi)+ \Phi(\lambda_{\xi^3}^{-1} \lambda_\xi \rho_\xi \lambda_\xi) + \Phi(\lambda_{\xi^3}^{-1} \lambda_\xi^3) =$\\   
			\hspace*{32mm}$ = \Phi(\lambda_{\xi^3}^{-1} \rho_\xi \rho_\xi\lambda_\xi)+ \Phi(\lambda_{\xi^3}^{-1} \rho_\xi\lambda_\xi^2) + \Phi(\lambda_{\xi^3}^{-1} \lambda_{\xi^2} \lambda_\xi),$                                                                          
			\item[\emph{(ii)}] $\mathscr{T}(L \times \mathrm{T}_\epsilon(L)):\quad\; \mathrm{d}_\xi \rho_{\xi^2} + \mathrm{d}_\xi \lambda_\xi \rho_\xi  + \mathrm{d}_\xi  \lambda_\xi^2 = \mathrm{d}_\xi \rho_\xi^2 + \mathrm{d}_\xi  \rho_\xi\lambda_\xi + \mathrm{d}_\xi \lambda_{\xi^2}$,	\end{enumerate}
		\item[\emph{(E)}] left alternative: 
		\begin{enumerate}
			\item[\emph{(i)}] $\Phi(L,A):\quad\quad\quad\quad \Phi(\lambda_{\xi\cdot\xi\eta}^{-1} \rho_{\xi\eta} \lambda_\xi) + \Phi(\lambda_{\xi\cdot\xi\eta}^{-1} \lambda_\xi \rho_\eta \lambda_\xi)=$\\   
			\hspace*{32mm}$ = \Phi(\lambda_{\xi^2\eta}^{-1} \rho_\eta \rho_\xi \lambda_\xi)+ \Phi(\lambda_{\xi^2\eta}^{-1} \rho_\eta \lambda_\xi^2),$ 
			\item[\emph{(ii)}] $\mathscr{T}(L \times \mathrm{T}_\epsilon(L)):\quad\; \mathrm{d}_\xi \rho_{\xi\eta} + \mathrm{d}_\xi \lambda_\xi \rho_\eta = \mathrm{d}_\xi  \rho_\eta \rho_\xi + \mathrm{d}_\xi \rho_\eta \lambda_\xi$,	\end{enumerate}
		\item[\emph{(F)}] right alternative: 		\begin{enumerate}
			\item[\emph{(i)}] $\Phi(L,A):\quad\quad\quad\quad \Phi(\lambda_{\eta\xi\cdot\xi}^{-1} \rho_\xi \lambda_\eta\lambda_\xi) + \Phi(\lambda_{\eta\xi\cdot\xi}^{-1} \lambda_{\eta\xi} \lambda_\xi) =$\\   
			\hspace*{32mm}$ = \Phi(\lambda_{\eta\xi^2}^{-1} \lambda_\eta\rho_\xi \lambda_\xi) + \Phi(\lambda_{\eta\xi^2}^{-1} \lambda_\eta\lambda_\xi^2),$ 
			\item[\emph{(ii)}] $\mathscr{T}(L \times \mathrm{T}_\epsilon(L)):\quad\; \mathrm{d}_\xi \rho_\xi \lambda_\eta + \mathrm{d}_\xi \lambda_{\eta\xi} = \mathrm{d}_\xi \lambda_\eta\rho_\xi + \mathrm{d}_\xi \lambda_\eta\lambda_\xi $,	\end{enumerate}
		\item[\emph{(G)}] flexible: 
		\begin{enumerate}
			\item[\emph{(i)}] $\Phi(L,A):\quad\quad\quad\quad \Phi(\lambda_{\xi\cdot\eta\xi}^{-1} \rho_{\eta\xi} \lambda_\xi)+ \Phi(\lambda_{\xi\cdot\eta\xi}^{-1} \lambda_\xi\lambda_\eta\lambda_\xi) =$\\   
			\hspace*{32mm}$ = \Phi(\lambda_{\xi\eta\cdot\xi}^{-1} \rho_\xi\rho_\eta \lambda_\xi)+ \Phi(\lambda_{\xi\eta\cdot\xi}^{-1} \lambda_{\xi\eta} \lambda_\xi),$ 
			\item[\emph{(ii)}] $\mathscr{T}(L \times \mathrm{T}_\epsilon(L)):\quad\;\mathrm{d}_\xi \rho_{\eta\xi} + \mathrm{d}_\xi \lambda_\xi\lambda_\eta = \mathrm{d}_\xi \rho_\xi\rho_\eta + \mathrm{d}_\xi \lambda_{\xi\eta}$,	\end{enumerate}
		\item[\emph{(H)}] left Bol: 
		\begin{enumerate}
			\item[\emph{(i)}] $\Phi(L,A):\quad\quad\quad\quad \Phi(\lambda_{\xi(\eta\cdot \xi\zeta)}^{-1} \rho_{\eta\cdot \xi\zeta} \lambda_\xi)+ \Phi(\lambda_{\xi(\eta\cdot \xi\zeta)}^{-1} \lambda_\xi \lambda_\eta\rho_\zeta \lambda_\xi)=$\\   
			\hspace*{32mm}$= \Phi(\lambda_{(\xi \cdot \eta\xi)\zeta}^{-1} \rho_\zeta\rho_{\eta\xi} \lambda_\xi)+\Phi(\lambda_{(\xi \cdot \eta\xi)\zeta}^{-1} \rho_\zeta \lambda_\xi \lambda_\eta\lambda_\xi),$ 
			\item[\emph{(ii)}] $\mathscr{T}(L \times \mathrm{T}_\epsilon(L)):\quad\;\mathrm{d}_\xi \rho_{\eta\cdot \xi\zeta} + \mathrm{d}_\xi  \lambda_\xi \lambda_\eta\rho_\zeta =
			\mathrm{d}_\xi \rho_\zeta\rho_{\eta\xi} + \mathrm{d}_\xi \rho_\zeta \lambda_\xi \lambda_\eta$,	\end{enumerate}
		\item[\emph{(J)}] right Bol: 
		\begin{enumerate}
			\item[\emph{(i)}] $\Phi(L,A):\quad\quad\quad\quad \Phi(\lambda_{\zeta(\xi\eta\cdot\xi)}^{-1} \lambda_\zeta\rho_\xi\rho_\eta \lambda_\xi)+ \Phi(\lambda_{\zeta(\xi\eta\cdot\xi)}^{-1} \lambda_\zeta\lambda_{\xi\eta}\lambda_\xi) = $\\   
			\hspace*{32mm}$ =  \Phi(\lambda_{(\zeta\xi\cdot\eta)\xi}^{-1} \rho_\xi\rho_\eta\lambda_\zeta \lambda_\xi)+ \Phi(\lambda_{(\zeta\xi\cdot\eta)\xi}^{-1} \lambda_{\zeta\xi\cdot\eta} \lambda_\xi)$, 
			\item[\emph{(ii)}] $\mathscr{T}(L \times \mathrm{T}_\epsilon(L)):\quad\;\mathrm{d}_\xi \lambda_\zeta\rho_\xi\rho_\eta + \mathrm{d}_\xi \lambda_\zeta\lambda_{\xi\eta} =  \mathrm{d}_\xi \rho_\xi\rho_\eta\lambda_\zeta + \mathrm{d}_\xi \lambda_{\zeta\xi\cdot\eta}$. 	\end{enumerate}
	\end{enumerate}
\end{prop}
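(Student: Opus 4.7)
The plan is to apply Proposition \ref{leftandrightinverses} with the explicit loop cocycle \eqref{mapPQ}, and translate each resulting identity into a statement about $\Phi$ applied to a composition of left and right translations. Since $P(\xi,\eta)$ and $Q(\xi,\eta)$ are both $\Phi$-values of inner mappings, and $\Phi$ is a homomorphism, products and inverses of $P,Q$-values become $\Phi$ of the corresponding products or inverses in $\mathrm{Inn}(L)$. Adjacent factors $\lambda_{\xi\eta}^{-1}$ telescope against $\lambda_{\xi\eta}$ arising from a neighbouring term, and the surviving outer factor $\lambda_\mu^{-1}$ is dictated by the weak property assumed on $L$ (for instance $\xi\cdot\xi\eta = \xi^2\eta$ for left alternative, $\xi(\eta\cdot\xi\zeta)=(\xi\cdot\eta\xi)\zeta$ for left Bol, $\xi\xi^{-1}=\epsilon$ for two-sided inverse, etc.).

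A central observation is that Proposition \ref{leftandrightinverses} sometimes supplies several sub-identities (two for each of (B), (C), (E), (F), (G) and three for (H), (J)), whereas Proposition \ref{tangent}(i) lists only one equation per property. The reason is that, after substituting \eqref{mapPQ}, some of these sub-identities reduce to translation identities that are automatic under the weak property of $L$: for example $\lambda_{\xi^{-1}}=\lambda_\xi^{-1}$ in the left inverse case, $\lambda_x\lambda_y\lambda_x=\lambda_{x\cdot yx}$ together with the conjugation identity $\lambda_\xi\rho_{\xi\zeta}=\rho_\zeta\lambda_\xi\rho_\xi$ in the left Bol case, and analogous identities in the remaining cases. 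These are tautological under $\Phi$, so only the genuinely nontrivial identity survives, and it matches (i) on the nose.

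For part (ii), one specialises $\Phi=\mathrm{d}_\epsilon$. Each summand in the identity (i) has the form $\mathrm{d}_\epsilon(\lambda_\mu^{-1}A\lambda_\xi)$, where the outer factors $\lambda_\mu^{-1}$ and $\lambda_\xi$ are the same on both sides and across all summands. The chain rule together with $A(\xi)=\mu$, which holds since the whole composition is an inner mapping, yields
\[
\mathrm{d}_\epsilon(\lambda_\mu^{-1}A\lambda_\xi) = \mathrm{d}_\mu\lambda_\mu^{-1}\circ\mathrm{d}_\xi A\circ\mathrm{d}_\epsilon\lambda_\xi.
\]
Since $\mathrm{d}_\epsilon\lambda_\xi$ and $\mathrm{d}_\mu\lambda_\mu^{-1}$ are linear isomorphisms, they can be stripped simultaneously from both sides, giving the identity (ii); conversely, re-composing with them recovers (i).

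The main obstacle is the bookkeeping in the Bol cases (H) and (J), where three compound identities must be handled simultaneously, two of them shown to be automatic from the Bol hypothesis on $L$, and the third seen to coincide with the single equation in (i). The remaining cases reduce to routine telescoping of products of $\lambda$'s and $\rho$'s; the only mild subtlety is keeping track of which simplifications invoke a genuine loop identity in $L$ (such as $x\backslash xy=y$ or $\lambda_{\xi^{-1}}=\lambda_\xi^{-1}$) and which are purely algebraic manipulations inside $\mathrm{Inn}(L)$.
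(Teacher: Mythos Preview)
Your proposal is correct and follows essentially the same approach as the paper: substitute \eqref{mapPQ} into the identities of Proposition~\ref{leftandrightinverses}, observe that certain sub-identities collapse to the translation identities characterizing the assumed property of $L$ (e.g.\ $\lambda_{\xi^{-1}}=\lambda_\xi^{-1}$ for left inverse, $\lambda_\xi\lambda_\eta\lambda_\xi=\lambda_{\xi\cdot\eta\xi}$ and $\lambda_\xi\rho_{\xi\zeta}=\rho_\zeta\lambda_\xi\rho_\xi$ for left Bol), and then for (ii) strip the common outer factors $\lambda_\mu^{-1}$, $\lambda_\xi$ via the chain rule. The paper's own proof is terser---it simply asserts that the first (respectively first two) sub-identities ``are equivalent to the identities that characterize the corresponding weak inverse or weak associative property of $L$'' and that the extra flexibility of $\mathrm{d}_\xi$ over $\Phi$ allows the simplification to (ii)---so your write-up actually supplies the mechanism the paper leaves implicit.
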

\begin{proof} Substituting the expression (\ref{mapPQ}) of the loop cocycle of the tangent-like extension $\Phi(L,A)$ into the first identities of (B), (C), (E), (F), (G), respectively, into the first two identities of  (H), (J) in Proposition \ref{leftandrightinverses}, we get relationships, which are equivalent to the identities that characterize the corresponding weak inverse or weak associative property of $L$. The replacement of (\ref{mapPQ}) into the further identities results in non-trivial conditions for each property of the tangent-like extension $\Phi(L,A)$, which are  listed in the items (i) of Proposition \ref{tangent}. \par
The inner mapping group $\mathrm{Inn}(L)$ is a subgroup of the group generated by all left and right translations.
The map $\Phi: \mathrm{Inn}(L) \to\mathrm{Aut}(A)$ is defined only on $\mathrm{Inn}(L)$, but the assignment $\phi\mapsto\mathrm{d}_\xi\phi$, $\xi\in L$ is defined on all left and right translations of the $\mathcal{C}^r$-differentiable loop $L$. Hence we can simplify the equations obtained by the application of the conditions for the tangent-like extension to the tangent prolongation. We list for each property the corresponding necessary and sufficient conditions of the tangent prolongation $\mathscr{T}(L \times \mathrm{T}_\epsilon(L))$ in the items (ii) of Proposition \ref{tangent}.
\end{proof}

\section{Properties of the tangent prolongation}

\begin{theor} \label{mainres} The tangent prolongation $\mathscr{T}(L \times \mathrm{T}_\epsilon(L))$ of a $\mathcal{C}^r$-differentiable loop $L$ ($r\geq 1$) is a $\mathcal{C}^{r-1}$-differentiable loop that has a weak inverse or weak associative property if and only if $L$ has the corresponding property.
\end{theor}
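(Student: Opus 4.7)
The plan is to combine Proposition \ref{tangentdiff} (which settles $\mathcal{C}^{r-1}$-differentiability) with Proposition \ref{tangent}, reducing the theorem to the observation that each tangent-space identity labelled (ii) in Proposition \ref{tangent} holds automatically for the tangent prolongation whenever $L$ satisfies the corresponding weak property. The ``only if'' direction is immediate from the remark following Definition \ref{ext2}: the projection $(\xi,x)\mapsto\xi$ is a loop epimorphism from $F(P,Q)$ onto $L$, so every weak inverse or weak associative property of the prolongation descends to $L$.

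For the ``if'' direction, by Lemma \ref{cocyclepro} and Proposition \ref{tangent} it suffices to verify each identity (ii) under the assumption that the defining identity of the corresponding weak property holds in $L$. The key observation is that each (ii) is obtained by differentiating both sides of the defining loop identity, viewed as a function of the distinguished repeated loop variable $\xi$, and evaluating at the base point; the chain and product rules then produce exactly the sums of tangent maps appearing in (ii).

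To see the mechanism, I would write out the left alternative case: differentiating both sides of $z\cdot(z\eta)=z^2\cdot\eta$ with respect to $z$ at $z=\xi$, and applying the product rule to each occurrence of $z$, yields on the left $\mathrm{d}_\xi\rho_{\xi\eta} + \mathrm{d}_\xi(\lambda_\xi\rho_\eta)$ and on the right $\mathrm{d}_\xi(\rho_\eta\rho_\xi) + \mathrm{d}_\xi(\rho_\eta\lambda_\xi)$, which is precisely identity (E)(ii). For monoassociativity (D), flexibility (G), and both Bol identities (H) and (J), the same procedure applies with the repeated variable appearing three or four times; the number of resulting tangent-map summands matches the number appearing in (ii). The two-sided inverse case (A) requires a small twist: one differentiates both $z\cdot z^{-1}=\epsilon$ and $z^{-1}\cdot z=\epsilon$ at $z=\xi$, then eliminates the tangent map of the inversion $\iota:z\mapsto z^{-1}$ between the two resulting relations to arrive at (A)(ii); the left and right inverse cases (B), (C) are handled analogously.

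The main obstacle is not conceptual but administrative bookkeeping: there are nine separate properties (A)--(J), each demanding its own routine differentiation and identification of the resulting tangent maps with those in Proposition \ref{tangent}(ii). The Bol identities are the lengthiest because three loop variables are involved with one of them appearing four times, but the pattern is identical to the shorter cases and introduces no new idea.
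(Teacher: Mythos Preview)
Your proposal is correct and follows essentially the same route as the paper: the paper likewise invokes Proposition~\ref{tangentdiff} for differentiability, notes that the weak properties descend to $L$ for the ``only if'' direction, and then, for each property, differentiates the defining identity along a curve $\xi(t)$ with $\xi(0)=\xi$ to recover exactly the condition~(ii) of Proposition~\ref{tangent}. Your description of eliminating the derivative of the inversion map between the two relations obtained from $z\cdot z^{-1}=\epsilon$ and $z^{-1}\cdot z=\epsilon$ in case~(A), and then reusing this in~(B) and~(C), is precisely what the paper does in equations~(\ref{derinv})--(\ref{derrightinv}).
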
 
\begin{proof} According to Proposition \ref{tangentdiff} the tangent prolongation $\mathscr{T}(L \times \mathrm{T}_\epsilon(L))$ of $L$ is $\mathcal{C}^{r-1}$-differentiable. It follows from Lemma \ref{cocyclepro} and Proposition \ref{leftandrightinverses}, that a weak inverse or weak associative property of the tangent prolongation $\mathscr{T}(L \times T_\epsilon(L))$ implies the corresponding property of $L$. Let us consider a differentiable curve $\xi(t)$ in $L$ with initial values $\xi(0) = \xi$ and $\frac{d\xi}{dt}(0) = \dot\xi$ defined on an open interval $I\subset\mathbb R$ containing $0\in I$. \\
	If all elements of $L$ have two-sided inverses then the derivation of the identity $\xi(t)^{-1}\cdot\xi(t) = \epsilon$ at $t = 0$ gives 
	\[\mathrm{d}_{\xi^{-1}}\rho_\xi (\dot\xi^{-1}) + \mathrm{d}_{\xi}\lambda_{\xi^{-1}}(\dot\xi) = 0,\] where $\dot\xi^{-1}: = \frac{d\xi^{-1}(t)}{dt}(0)$.  Hence we can express 
	\begin{equation} \label{derinv} \dot\xi^{-1} = -  \mathrm{d}_{\xi}\rho_\xi^{-1}\lambda_{\xi^{-1}}(\dot\xi).\end{equation}
	The derivation of the identity $\xi(t)\cdot\xi(t)^{-1} = \epsilon$  at $t = 0$ implies 
	\begin{equation} \label{derinv1}\dot\xi^{-1} = -  \mathrm{d}_{\xi}\lambda_\xi^{-1}\rho_{\xi^{-1}}(\dot\xi).\end{equation}
	It follows 
	\[\mathrm{d}_{\xi}\rho_{\xi^{-1}} = \mathrm{d}_{\xi}\lambda_\xi\rho_\xi^{-1}\lambda_{\xi^{-1}},\]
	giving condition (Aii) in Proposition \ref{tangent}.\\
	Assume that $L$ has the left inverse property and differentiate the identity \[\xi(t)^{-1}\cdot\xi(t)\eta = \eta\quad \text{at}\quad t = 0.\] We obtain 
	\begin{equation} \label{derleftinv}\mathrm{d}_{\xi^{-1}}\rho_{\xi\eta}(\dot\xi^{-1}) +  \mathrm{d}_{\xi}\lambda_{\xi}^{-1}\rho_\eta(\dot\xi) = 0. \end{equation}
	Replacing (\ref{derinv}) into (\ref{derleftinv}) gives
	\[ \mathrm{d}_{\xi}\rho_{\xi\eta}\rho_\xi^{-1}\lambda_{\xi}^{-1} =  \mathrm{d}_{\xi}\lambda_{\xi}^{-1}\rho_\eta,\]
	which is equivalent to condition (Bii).\\
	Similarly, if  $L$ has the right inverse property, then it follows from the identity $\eta\xi(t)\cdot\xi(t)^{-1} = \eta$ that 
	\begin{equation} \label{derrightinv}\mathrm{d}_{\xi}\rho_\xi^{-1}\lambda_{\eta}(\dot\xi) +  \mathrm{d}_{\xi^{-1}}\lambda_{\eta\xi}(\dot\xi^{-1}) = 0.\end{equation}
	Putting (\ref{derinv1}) into (\ref{derrightinv}) we get  
	$\mathrm{d}_{\xi}\rho_\xi^{-1}\lambda_{\eta} = \mathrm{d}_{\xi}\lambda_{\eta\xi}\lambda_{\xi}^{-1}\rho_{\xi}^{-1}$, equivalently to the condition (Cii).\\
	For monoassociative loop $L$ we differentiate
	the identity $\xi(t)\cdot \xi(t)^2 = \xi(t)^2\cdot \xi(t)$ at $t = 0$. We obtain 
	\[\mathrm{d}_\xi \rho_{\xi^2}(\dot\xi) + \mathrm{d}_\xi \lambda_{\xi}\rho_{\xi}(\dot\xi) + \mathrm{d}_\xi \lambda_\xi^2(\dot\xi) = \mathrm{d}_\xi \rho_\xi^2(\dot\xi) + \mathrm{d}_\xi \rho_\xi\lambda_\xi(\dot\xi) + \mathrm{d}_\xi \lambda_{\xi^2}(\dot\xi),\]
	giving the condition (Dii).\\
	If $L$ is left alternative, right alternative or flexible then for any $\eta\in L$ we differentiate
	the identities $\xi(t)^2\cdot \eta = \xi(t)\cdot\xi(t)\eta$,\; $\eta\cdot\xi(t)^2 = \eta\xi(t)\cdot\xi(t)$, respectively, $\xi(t) \cdot \eta\xi(t)=\xi(t) \eta \cdot \xi(t)$ at 
	$t = 0$ and we obtain the conditions (Eii), (Fii), respectively, (Gii).\\
	If $L$ is a left Bol loop or right Bol loop then for any $\eta, \zeta\in L$ we differentiate  at $t = 0$ the identities $(\xi(t)\cdot\eta\xi(t))\zeta = \xi(t)(\eta\cdot\xi(t)\zeta)$, respectively, $\zeta(\xi(t)\eta\cdot\xi(t)) = (\zeta\xi(t)\cdot\eta)\xi(t)$, and we get the conditions (Hii), respectively, (Jii). Hence the assertion is true.
\end{proof}

\bigskip
\noindent
Author's addresses: \\
\'Agota Figula, Institute of Mathematics, University of Debrecen, H-4002 Debrecen, P.O.Box 400, Hungary. {\it E-mail}: {\tt {}figula@science.unideb.hu} \\[1ex]
P\'eter T. Nagy, Institute of Applied Mathematics, \'Obuda University, 1034 Budapest, B\'ecsi \'ut 96/B, Hungary. {\it E-mail}: {\tt {}nagy.peter@nik.uni-obuda.hu}
\end{document}